\documentclass[12pt,a4paper]{amsart}
\usepackage{amsmath,amssymb,amsfonts,amsthm}
\usepackage{tikz}
\usepackage{float}
\usetikzlibrary{arrows.meta,positioning,calc}
\usepackage[colorlinks=true,linkcolor=red,citecolor=blue,urlcolor=blue]{hyperref}

\numberwithin{equation}{section}

\newtheorem{theorem}{Theorem}[section]
\newtheorem{lemma}[theorem]{Lemma}
\newtheorem{proposition}[theorem]{Proposition}
\theoremstyle{definition}
\newtheorem{definition}[theorem]{Definition}
\theoremstyle{remark}
\newtheorem{remark}[theorem]{Remark}
\newtheorem*{conjecture}{Conjecture}

\newcommand{\Z}{\mathbb{Z}}
\newcommand{\R}{\mathbb{R}}
\newcommand{\C}{\mathbb{C}}
\newcommand{\N}{\mathbb{N}}
\newcommand{\Q}{\mathbb{Q}}
\newcommand{\calA}{\mathcal{A}}

\DeclareMathOperator{\supp}{supp}

\title[TILES AND WEAK TILES IN $\Z_{pq}$]{TILES AND WEAK TILES IN $\Z_{pq}$}

\author{Mamateli Kadir}
\author{Kaibo Fan}

\subjclass{Primary 43A99; Secondary 05B45, 26E30.}
\keywords{Weak tile; positive-definite tile; finite cyclic group.}
\thanks{M. Kadir is supported by NSF of China (Grant No. 12361015) and by NSF of Xinjiang Uygur Autonomous Region, P. R. China (Grant No. 2025D01A09).}

\begin{document}

\begin{abstract}
This paper investigates the relationship between tiles and weak tiles in the context of finite cyclic group $\Z_{pq}$. We prove that weak tiles and translational tiles are equivalent in this group. Our proof employs Fourier analysis, Delsarte parameters, and the Coven-Meyerowitz conditions.
\end{abstract}

\maketitle

\section{Introduction}

\subsection{Translational tiling and spectral set.}
The study of tiling, the covering of space by congruent copies of a given shape, stretches back to the dawn of civilization. In modern mathematics, we usually study tiling in the context of having a finite set of ''shapes'' called prototiles and using congruent copies of these prototiles to cover the whole Euclidean space without overlapping. Translational tiling is one such example, in which only the translations of the prototiles are used to tile the space.

Let $\Omega \subset \R^d$ be a bounded measurable set with positive finite Lebesgue measure. If there exists a set of translation vectors $T \subset \R^d$ such that the family of translated copies $\{\Omega+t\}_{t\in T}$ covers the whole space $\R^d$ almost everywhere without overlap, i.e.,
\[
    \sum_{t\in T} \mathbf{1}_{\Omega}(x-t)=1, \quad \text{a.e. } x\in \R^d.
\]
Then the set $\Omega \subset \R^d$ is called a \textbf{tile}, and $T$ is a \textbf{tiling set}.

On the other hand, if there exists a countable set $\Lambda \subset \R^d$ such that exponential function system
\[
    E(\Lambda)=\{e^{2\pi i \langle \lambda,x\rangle}:\lambda\in\Lambda\}
\]
is a complete orthogonal basis of $L^2(\Omega)$, then $\Omega$ is called a \textbf{spectral set}, and $\Lambda$ is a \textbf{spectrum} of $\Omega$.

The translational tile is closely related to the spectral set conjecture, also known as the Fuglede's conjecture, a fundamental question at the intersection of geometry and harmonic analysis. it was first posed by Fuglede in 1974 \cite{Fuglede}, motivated by a problem of Segal concerning the commutativity of certain partial differential operators.

\begin{conjecture}
A Borel set $\Omega \subset \R^d$ of positive and finite Lebesgue measure is a spectral set if and only if it is a translational tile.
\end{conjecture}

This conjecture has spurred extensive research, revealing deep connections to combinatorics, number theory, and harmonic analysis.

While progress was made in special cases, including convex planar domains \cite{IosevichKatzTao}, convex polytopes in $\R^3$ \cite{GreenfeldLev} and unions of two intervals on the real line \cite{Laba}, the conjecture was ultimately disproved in its full generality in dimensions 3 and above through a series of counterexamples for both directions \cite{Tao,FarkasMatolcsiMora,KolountzakisMatolcsiHadamard,KolountzakisMatolcsiTiles,Matolcsi}, Despite these, the conjecture's validity in dimensions 1 and 2 remains open, and a landmark result established its truth for all convex bodies in every dimension \cite{LevMatolcsi}.

In particular, since high-dimensional counterexamples in Euclidean spaces arise from finite groups, the conjecture on finite abelian groups has attracted considerable interest \cite{Malikiosis,MalikiosisKolountzakis,ShiPQR,ShiEqui,Somlai,Zhang}.

\subsection{Weak tiling and pd-tiling.}
A pivotal concept in the proof for convex bodies \cite{LevMatolcsi} was that of \textbf{weak tiling}, which relaxes the stringent conditions of classical translational tiling. A set $A\subseteq \R^d$ weakly tiles another measurable set $B\subseteq \R^d$, if there exists a positive, locally finite measure $\nu$ on $\R^d$ such that \(\nu(\{0\})=1\) and $\mathbf{1}_A*\nu=\mathbf{1}_B$ a.e.. This analytic formulation provided the crucial bridge to connect spectrality to a geometric property.

Subsequently, Kiss, Matolcsi, Matolcsi, and Somlai \cite{KissMatolcsiMatolcsiSomlai} adapted this notion to finite abelian groups $G$.

\begin{definition}
A set $A$ weakly tiles $G$, if there exists a nonnegative function $f:G\to\R_+$ such that \(f(0)=1\) and $\mathbf{1}_A*f=\mathbf{1}_G$.
\end{definition}

To align with the harmonic analysis framework, they introduced the more refined concept of a \textbf{positive-definite weak tile} (or \textbf{pd-tile}), where the function $f$ is additionally required to be positive definite and normalized $f(0)=1$.

\begin{definition}
A set $A\subset G$ is said to be a \textbf{positive-definite weak tile} of $G$ (abbreviated as pd-tile), if there exists a function $f:G\to\R$ such that
\[
    f(0)=1, \quad f\geq 0, \quad \widehat{f}\geq 0, \quad \mathbf{1}_A*f=\mathbf{1}_G.
\]
\end{definition}

This definition is powerful because both classical tiles and spectral sets are known to be pd-tiles\cite{KissMatolcsiMatolcsiSomlai}. Consequently, establishing the ``spectral$\Rightarrow$ tile'' direction of Fuglede's conjecture in a finite abelian group $G$ is equivalent to proving that every pd-tile in $G$ is a classical tile. The first examples of sets that are \textbf{pd-tiles} but neither spectral nor classical tiles,termed ``lonely weak tiles''were recently constructed by Kiss, Londner, Matolcsi, and Somlai \cite{KissLondnerMatolcsiSomlaiLonely}, demonstrating the non-triviality of this relaxation.

\subsection{The Coven--Meyerowitz conditions.}
Parallel to the study of Fugledes conjecture, the theory of translational tilings of the integers has its own central open problem: \textbf{the Coven--Meyerowitz conjecture}. For a finite set $A\subseteq \Z$ tiles $\Z$, it is well known that tiling of $\Z$ by translates of $A$ is equivalent, after periodization, to a tiling $A\oplus B=\Z_M$ in a cyclic group for some period $M$. Coven--Meyerowitz \cite{CovenMeyerowitz} formulated two algebraic conditions on the cyclotomic divisibility of the mask polynomial $A(X)=\sum_{a\in A}X^a$:
\begin{align*}
	\text{(T1)}\qquad 
	& A(1)=\prod_{s\in S_A}\Phi_s(1), 
	\quad \text{where}\quad 
	S_A=\{p^\alpha:\Phi_{p^\alpha}(X)\mid A(X)\}.\\
	\text{(T2)}\qquad 
	& \text{if } s_1,\ldots,s_k\in S_A 
	\text{ are powers of distinct primes, then}\\
	& \Phi_{s_1\cdots s_k}(X)\mid A(X).
\end{align*}

They proved \cite{CovenMeyerowitz} that $(T1)+(T2)$ are sufficient for tiling, and that $(T1)$ is necessary. The necessity of (T2) for all tilings, \textbf{the Coven--Meyerowitz conjecture}, remains a major open problem, having been verified only in special cases. If $|A|$ has at most two prime factors, then necessity of $(T2)$ is true. Thus, a finite set $A\subseteq\Z$ with at most two prime factors, $A$ tiles $\Z$ if and only if $A$ satisfies condition (T1) and (T2).

The preceding work raises a fundamental question: on which groups does the equivalence between \textbf{pd-tiling} and translational tiling hold? Currently, this equivalence is known only for finite groups $\Z_p$, $\Z_p^2$, and $\Z_{p^n}$. The main result of this paper significantly extends this list by proving the equivalence for the cyclic group $\Z_{pq}$, where $p$ and $q$ are distinct primes.

A relationship among the different concepts in a cyclic group is given in the following Figure \ref{fig:relations}.

\begin{theorem}\label{thm:main}
Let $p,q$ be distinct primes. For any subset $A\subset \Z_{pq}$, if $A$ is a positive-definite weak tile, then $A$ is a translational tile, i.e., there exists $B\subset \Z_{pq}$ such that $A\oplus B=\Z_{pq}$.
\end{theorem}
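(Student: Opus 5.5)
From $\mathbf{1}_A*f=\mathbf{1}_G$ with $f\ge 0$, $f(0)=1$, $\widehat f\ge 0$, I would extract Delsarte-type information about $A$ and reduce the tiling question to cyclotomic divisibility of the mask polynomial $A(X)$ modulo $X^{pq}-1$. Since the divisors of $pq$ are $1,p,q,pq$, the (T1)/(T2) criterion recalled in the introduction says that $A$ tiles $\Z_{pq}$ as soon as $|A|=\prod_{s\in S_A}\Phi_s(1)$ and, when both $p,q\in S_A$, also $\Phi_{pq}\mid A$. So the goal is to show that a pd-tile forces $|A|\in\{1,p,q,pq\}$ together with the matching vanishing pattern of $\widehat{\mathbf{1}_A}$ on the orbits of frequencies of order $p$, $q$, $pq$.

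\textbf{Step 1: Fourier identities.} Taking Fourier transforms of $\mathbf{1}_A*f=\mathbf{1}_G$ gives $\widehat{\mathbf{1}_A}(\xi)\widehat f(\xi)=0$ for $\xi\ne 0$ and $|A|\,\widehat f(0)=pq$, i.e.\ $\sum_x f(x)=pq/|A|$. Thus $Z:=\supp\widehat f\setminus\{0\}$ lies in the zero set of $\widehat{\mathbf{1}_A}$. Since $f$ is real with $f(0)=1$, Fourier inversion gives $\sum_\xi \widehat f(\xi)=pq$, so $\widehat f\not\equiv 0$ off the origin unless $|A|=1$.

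Everything is rational, so zero sets and supports are unions of Galois orbits, i.e.\ of the order classes $O_d=\{\xi:\operatorname{ord}\xi=d\}$. Averaging $f$ over $\operatorname{Aut}(\Z_{pq})$ preserves all the hypotheses, so I may assume $f$ and $\widehat f$ are constant on each $O_d$. Then $f$ is determined by four nonnegative values $f_1=1,f_p,f_q,f_{pq}$, and $\widehat f$ by four nonnegative values $\widehat f_1,\widehat f_p,\widehat f_q,\widehat f_{pq}$. Also $\widehat f_d>0$ forces $\Phi_d\mid A(X)$.

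\textbf{Step 2: the tiling equation as a Delsarte linear program.} Write $\mathbf{1}_A*f=\mathbf{1}_G$ as $\sum_{a\in A}f(x-a)=1$ for all $x$. Putting $x\in A$ and using $f\ge0$ shows $f$ vanishes on $(A-A)\setminus\{0\}$. Let $r_d$ be the number of pairs in $A$ with difference of order $d$. Then $\sum_x f(x)=pq/|A|$ and the vanishing condition $\sum_d f_d r_d=0$ for $d\ne1$, combined with the positive-definiteness relations $\widehat{\mathbf{1}_A}\cdot\overline{\widehat{\mathbf{1}_A}}\ge 0$, give a small linear system in the parameters $(f_d)$, $(\widehat f_d)$ and $(r_d)$.

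I would then split into cases according to which of $\Phi_p,\Phi_q,\Phi_{pq}$ divide $A(X)$:
\begin{itemize}
\item If no cyclotomic factor divides $A(X)$, then $\widehat f$ is supported at $0$, so $f$ is constant. Since $f(0)=1$, $f\equiv1$ and $|A|=1$, which tiles.
\item If $\Phi_p\mid A$, then $A$ is equidistributed mod $p$ in the appropriate sense, and so on for the other factors.
\end{itemize}
The known structure of vanishing sums of $pq$-th roots of unity (de Bruijn/Lam--Leung) says that $A(X)\bmod (X^{pq}-1)$ is a nonnegative combination of $\Phi_p$-cosets and $\Phi_q$-cosets when $\Phi_{pq}\mid A$. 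This gives the explicit combinatorial shapes available.

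\textbf{Step 3: the main obstacle.} The hard step is excluding the ``bad'' cases that (T1)/(T2) rule out: for example $\Phi_{pq}\mid A$ with $|A|\notin\{p,q,pq\}$, or $\Phi_p,\Phi_q\mid A$ but $\Phi_{pq}\nmid A$. Here I would use the support condition on $f$. The constant-on-orbits $f$ must vanish on every order class met by $A-A$, and the mass identity $1+(p-1)f_p+(q-1)f_q+(pq-p-q+1)f_{pq}=pq/|A|$ then forces $|A|$ to divide the size of a subgroup avoiding $A-A$. Combining this with the Lam--Leung decomposition, I expect to derive $|A|\in\{p,q\}$ with $A$ a complete residue system modulo the complementary prime, or $|A|\in\{1,pq\}$. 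In each surviving case a complementing set $B$ can be written down explicitly, namely a subgroup $p\Z_{pq}$ or $q\Z_{pq}$, or equivalently (T1)+(T2) can be checked directly. The delicate arithmetic is matching $|A|$ to the orbit sizes $p-1$, $q-1$, $(p-1)(q-1)$, and I would attack it first by brute-force enumeration of the sign patterns of $(f_d,\widehat f_d)$.
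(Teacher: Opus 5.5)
Your preliminary reductions are sound: the identity $\widehat{\mathbf 1_A}\,\widehat f=pq\,\delta_0$, the averaging of $f$ over $\mathrm{Aut}(\Z_{pq})$, the vanishing of $f$ on $(A-A)\setminus\{0\}$, and the reduction to (T1)/(T2). The averaging is legitimate because $\supp\widehat{\mathbf 1_A}$ is a union of order classes, so the orthogonality survives.

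There is a genuine gap, however: the proposal stops exactly where the content of the theorem lies. That a pd-tile has $|A|\in\{1,p,q,pq\}$ with the matching cyclotomic pattern is only announced (``I expect to derive''). The mechanism you name for it does not work, because the mass identity together with $f_d=0$ on the classes met by $A-A$ imposes no arithmetic restriction on $|A|$.

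Concretely, in $\Z_3\times\Z_7\cong\Z_{21}$ let $A=\{(0,0),(0,1),(1,2),(1,3),(2,4),(2,5)\}$, $P=\Z_3\times\{0\}$ and $f=\delta_0+\frac{5}{4}\mathbf 1_{P\setminus\{0\}}$. Then $f\ge 0$, $f(0)=1$, and $f$ is constant on order classes. It vanishes on $(A-A)\setminus\{0\}$, since no two points of $A$ share a second coordinate. Also $\sum_x f(x)=\frac{7}{2}=21/|A|$, yet $|A|=6\nmid 21$. What excludes this example is $\widehat f=-\frac14+\frac{15}{4}\mathbf 1_{\{0\}\times\Z_7}$, which is negative at every frequency $(u,v)$ with $u\neq 0$. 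These are exactly the positivity/orthogonality constraints you list in Steps 1--2 but never write out or solve.

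The target you formulate is also off. For $A=\Z_{pq}$ the only subgroup meeting $A-A$ trivially is $\{0\}$, so $|A|$ cannot divide its order; what must come out is that $|A|$ is the \emph{index} of such a subgroup. Similarly, a tile of size $p$ is a complete residue system mod $p$, i.e.\ a transversal of the order-$q$ subgroup $p\Z_{pq}$, not a residue system mod $q$.

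To finish along your route, you would have to compute $\widehat f$ from the class values. With $P=q\Z_{pq}$ and $Q=p\Z_{pq}$ one gets $\widehat f=(1-f_p-f_q+f_{pq})+p(f_p-f_{pq})\mathbf 1_Q+q(f_q-f_{pq})\mathbf 1_P+pq\,f_{pq}\,\delta_0$. Then impose $\widehat f\ge 0$ and $\widehat f=0$ on $\supp\widehat{\mathbf 1_A}\setminus\{0\}$, and run through the cases according to which classes $A-A$ meets. Two facts help here:
\begin{itemize}
\item $A-A\subset P\cup Q$ forces $A$ into a single coset of $P$ or of $Q$.
\item By de Bruijn/Lam--Leung, $\Phi_{pq}\mid A(X)$ forces $A$ to be a union of cosets of $P$ or of $Q$, since every coset of $P$ meets every coset of $Q$ in exactly one point.
\end{itemize}
That would give a self-contained elementary proof.

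The paper avoids this enumeration differently. It lists the possible supports $H$ of $\widehat{\mathbf 1_A}$, discarding $\{0\}\cup R_1$ (which forces $pq\mid |A|$) and $\{0\}\cup R_p\cup R_q$ (by Lemma~\ref{lem:exceptional}). It realizes each remaining $H$ by a model tiling $D\oplus C=\Z_{pq}$ and then sandwiches $|A|$:
\begin{itemize}
\item $\widehat f/\widehat f(0)$ is feasible for $D^+(H')$ with value $|A|$;
\item $|\widehat{\mathbf 1_A}|^2/|A|^2$ is feasible for $D^-(H)$ with value $pq/|A|$;
\item the duality $D^+(H')D^-(H)=pq$ together with Proposition~\ref{prop:delsarte} gives $|A|=D^+(H')=|D|$ and $S_A=S_D$, hence (T1).
\end{itemize}
Condition (T2) is only at stake when $A=\Z_{pq}$.
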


\newpage
\begin{figure}
	\centering
	\definecolor{deepyellow}{RGB}{210, 160, 0} 
	\begin{tikzpicture}[
		scale=1,
		>={Stealth[length=3mm, width=2mm]},
		thick,
		box/.style={draw, align=center, minimum height=1.0cm, thick},
		wavedStyle/.style={decorate, decoration={snake, amplitude=1mm, segment length=5mm}}
		]
		\path[use as bounding box] (-8.5, -2.5) rectangle (8.5, 6.5);
		
		\definecolor{deepyellow}{RGB}{210, 160, 0}
		\node[box, minimum width=3.0cm, minimum height=1.1cm] (cm) at (0,5.2) {C-M\\ $(T1)+(T2)$};
		\node[box, minimum width=2.6cm] (tile) at (-4.5,3.2) {Tile};
		\node[box, minimum width=3.0cm] (spec) at (4.5,3.2) {Spectral};
		\node[box, minimum width=3.2cm] (weak) at (0,1.5) {Weak tile};
		
		\node[box, minimum width=2.8cm] (pd) at (0,-1.4) {Pd-tile};
		\draw[->] (cm.210) -- (tile.70);
		\draw[->, deepyellow] (tile.110) to[bend left=20] node[midway, above left, inner sep=2pt] {\large ?} (cm.180);
		
		\draw[->] (cm.330) -- (spec.110);
		\draw[->, deepyellow] (spec.70) to[bend right=20] node[midway, above right, inner sep=2pt] {\large ?} (cm.0);
		
		\draw[->] (tile.290) -- (weak.150);
		\draw[->, deepyellow] (weak.180) to[bend left=25] node[midway, below left, inner sep=2pt] {\Large ?} (tile.220);
		
		\draw[->] (spec.250) -- (weak.30);
		\draw[->, deepyellow] (weak.0) to[bend right=25] node[midway, below right, inner sep=2pt] {\large  ?} (spec.320);
		\draw[->] (pd.90) -- (weak.270);
		
		\draw[->, deepyellow] (weak.240) to node[left] {?} (pd.120);
		
	\end{tikzpicture}
\caption{The relationship between tiles, spectral sets, weak tiles, pd-tiles, and C-M conditions in cyclic groups.}
\label{fig:relations}
\end{figure}

The remainder of the paper is organized as follows. In Section \ref{sec:prelim}, we collect the necessary preliminaries, including the Fourier transform on finite abelian groups, mask polynomials and cyclotomic divisibility, step functions, and the Delsarte linear programming framework. Section \ref{sec:equivalence} presents the proof of the main result (Theorem \ref{thm:main}). The proof proceeds by classifying all possible Fourier supports of subsets of $\Z_{pq}$, employing Delsarte parameters to derive tight bounds on the size of a pd-tile, and finally verifying the Coven--Meyerowitz conditions to conclude that the set is a translational tile. We conclude the paper with a discussion of open problems, including possible extensions to more general cyclic groups and the structural decomposition of weak tiling measures.

\section{Preliminaries}\label{sec:prelim}

In this section we collect the necessary background material from harmonic analysis on finite abelian groups, the theory of mask polynomials and cyclotomic divisibility, and the Delsarte linear programming framework. These tools will be essential for the proof of our main theorem.

\subsection{Finite abelian groups and characters.}
Let $G$ be a finite abelian group. A \textbf{character} of $G$ is a group homomorphism $\chi:G\to\C^\times$, i.e., a map satisfying
\[
    \chi(x+y)=\chi(x)\chi(y), \quad \forall x,y\in G.
\]

The set of all characters of $G$, denoted by $\widehat{G}$, forms a group under pointwise multiplication, called the dual group of $G$. It is a standard fact that $\widehat{G}$ is isomorphic to $G$ itself, though not canonically.

Every finite abelian group $G$ admits a decomposition
\[
    \Z_{n_1}\times \Z_{n_2}\times \cdots \times \Z_{n_s},
\]
with $n_1\mid n_2\mid\cdots\mid n_s$. For an element $g=(g_1,g_2,\ldots,g_s)\in G$, we define the associated character $\chi_g:G\to\C^\times$ by
\[
    \chi_g(x_1,\ldots,x_s)=e^{2\pi i\sum_{j=1}^{s}\frac{x_jg_j}{n_j}},
\]
where $x=(x_1,\ldots,x_s)\in G$. This identification gives an explicit isomorphism $G\cong \widehat{G}$.

For a product of two finite abelian groups $G_1\times G_2$, every character decomposes as
\[
    \chi_{g_1,g_2}(x_1,x_2)=\chi_{g_1}(x_1)\cdot\chi_{g_2}(x_2),
\]
where, $g_1\in G_1$, $g_2\in G_2$ and $x_j\in G_j$. In particular, $\widehat{G_1\times G_2}\cong \widehat{G}_1\times\widehat{G}_2$.

\subsection{Fourier transform on finite abelian groups.}
For a function $f:G\to\C$, the \textbf{Fourier transform} $\widehat{f}:\widehat{G}\to\C$ is defined by
\[
    \widehat{f}(\chi)=\sum_{x\in G}f(x)\overline{\chi(x)}, \quad \chi\in\widehat{G}.
\]
With the identification $G\cong\widehat{G}$, we shall often write $\widehat{f}(\xi)$ for $\xi\in G$, where the character is understood to be $\chi_{\xi}(x)=e^{2\pi i\frac{\langle \xi,x\rangle}{M}}$ in the cyclic case.

The Fourier inversion transform is defined as
\[
    f(x)=\frac{1}{|G|}\sum_{\chi\in\widehat{G}}\widehat{f}(\chi)\chi(x), \quad x\in G.
\]

Convolution of two functions $f,g:G\to\C$, is defined by
\[
    (f*g)(x)=\sum_{y\in G}f(x-y)g(y),
\]
and satisfies the fundamental identity
\[
    \widehat{f*g}(\chi)=\widehat{f}(\chi)\widehat{g}(\chi).
\]

When $G=\Z_M$, we identify $\Z_M$ with the set $\{0,1,,M-1\}$. The Fourier transform then takes the familiar form

\[
    \widehat{f}(\xi)=\sum_{z=0}^{M-1}f(z)e^{-2\pi iz\xi/M}, \quad \xi=0,1,\cdots,M-1.
\]

\subsection{Mask polynomials, step functions and the C-M condition.}
Let $A\subset \Z$ be a finite set. If there exists a set $T\subset \Z$ such that the family of translates $\{A+t:t\in T\}$ is pairwise disjoint and
\[
    \bigcup_{t\in T}(A+t)=\Z,
\]
then we say that $A$ tiles $\Z$ by translations.

Equivalently, if there exists $T\subset\Z$ such that every $n\in\Z$ can be uniquely represented as
\[
    n=a+t, \quad a\in A,\ t\in T,
\]
we write $A\oplus T=\Z$.

A classical theorem of Newman \cite{Newman} states that any tiling complement $T$ must be periodic, i.e., there exists $M\in\N$ and a finite set $B\subset\Z$ such that $T=B\oplus M\Z$. Consequently, $|A||B|=M$ and $A\oplus B=\Z_M$, where the addition is taken modulo $M$.

Let $M=\prod_{i=1}^K p_i^{n_i}$ be the prime factorization of $M$. Let $f:\Z_M\to\C$ be any function. For $\Z_M\cong\{0,\ldots,M-1\}$, we define the mask polynomial of $f$ as
\begin{equation}\label{eq:maskpoly}
    F(X):=\sum_{z\in\Z_M}f(z)X^z.
\end{equation}
Recall that the Fourier transform on $\Z_M$ is given by
\[
    \widehat{f}(\xi)=\sum_{z\in\Z_M}f(z)e^{-2\pi iz\xi/M}, \quad (\xi=0,1,\ldots,M-1).
\]
With $\eta=e^{-2\pi i/M}$, we have the fundamental relation
\[
    F(\eta^\xi)=\widehat{f}(\xi).
\]
For nonnegative $f$, the total mass of $f$ is denoted
\begin{equation}\label{eq:totalmass}
    |F|:=F(1)=\sum_{z\in\Z_M}f(z)=\widehat{f}(0).
\end{equation}
The $n$-th cyclotomic polynomial $\Phi_n(X)$ is defined as
\[
    \Phi_n(X)=\prod_{\substack{1\leq k\leq n\\ (k,n)=1}}(X-e^{2\pi i k/n}),
\]
which is the minimal polynomial over $\Q$ of primitive $n$-th root of unity.

For each divisor $m\mid M$, we define the \textbf{step class}
\[
    R_m:=\{z\in\Z_M:(z,M)=m\}.
\]
Note that $R_M=\{0\}$.

\begin{definition}
A function $f$ is called a \textbf{step function} if it is constant on each step class $R_m$. We denote the vector space of step functions by $\calA(M)$.
\end{definition}

A fundamental result, proved in \cite{KissLondnerMatolcsiSomlaiFunctional}, is that $\calA(M)$ is invariant under the Fourier transform.

For a set $A\subset\Z_M$, its mask polynomial, taking $f=\mathbf{1}_A$ in (\ref{eq:maskpoly}), is defined as
\[
    A(X):=\sum_{a\in A}X^a.
\]
Moreover, from (\ref{eq:totalmass}) the total mass of $\mathbf{1}_A$ coincides with the cardinality $|A|$ of $A$.

The following lemma is standard and will be used repeatedly.

\begin{lemma}\label{lem:step-support}
Let $A\subset \Z_M$. For any divisor $m\mid M$, the Fourier transform $\widehat{\mathbf{1}}_A$ is either identically zero on the step class $R_m$ or nowhere zero. Consequently, support of $\widehat{\mathbf{1}}_A$ is a union of certain step classes.
\end{lemma}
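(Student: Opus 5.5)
The statement to prove is the standard one: for $A\subset\Z_M$ and $m\mid M$, $\widehat{\mathbf{1}}_A$ vanishes either everywhere or nowhere on $R_m$. I will prove it by Galois conjugation of cyclotomic values, using the relation $F(\eta^\xi)=\widehat{f}(\xi)$ recorded above.

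Write $\xi\in R_m$ as $\xi=m u$ with $\gcd(u,M/m)=1$. Then $\eta^{\xi}=e^{-2\pi i u/(M/m)}$ is a primitive $(M/m)$-th root of unity. Conversely, every primitive $(M/m)$-th root of unity arises as $\eta^{\xi}$ for some $\xi\in R_m$. Indeed, $\xi\mapsto\eta^\xi$ maps $R_m$ onto the set of primitive $(M/m)$-th roots of unity, since $(\xi,M)=m$ exactly when $\xi/m$ is a unit modulo $M/m$.

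Hence $\widehat{\mathbf{1}}_A(\xi)=0$ for a given $\xi\in R_m$ if and only if the primitive root $\eta^\xi$ is a root of $A(X)$. Since $A(X)$ has integer coefficients and $\Phi_{M/m}$ is the minimal polynomial over $\Q$ of any primitive $(M/m)$-th root of unity, this happens if and only if $\Phi_{M/m}(X)\mid A(X)$. That condition does not depend on the choice of $\xi\in R_m$.

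Alternatively, one can argue directly with Galois automorphisms. The automorphism $\sigma_u$ of $\Q(\zeta_M)$, for $u$ a unit modulo $M$, sends $\widehat{\mathbf{1}}_A(\xi)$ to $\widehat{\mathbf{1}}_A(u\xi)$, because the coefficients of $A$ are rational. Moreover, $R_m$ is exactly one orbit of the unit group $\Z_M^\times$ acting on $\Z_M$ by multiplication. This orbit claim is the only point needing care: any unit modulo $M/m$ lifts to a unit modulo $M$, by the Chinese remainder theorem or Dirichlet. Since $\sigma_u$ is injective, $\widehat{\mathbf{1}}_A(\xi)=0$ if and only if $\widehat{\mathbf{1}}_A(u\xi)=0$.

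The consequence follows at once. $\Z_M$ is the disjoint union of the classes $R_m$ over $m\mid M$, and on each class $\widehat{\mathbf{1}}_A$ is either nowhere zero or identically zero. Therefore $\supp\widehat{\mathbf{1}}_A$ is the union of the classes of the first kind.

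I expect no real obstacle. The only step requiring verification is the bijection between $R_m$ and the primitive $(M/m)$-th roots of unity, in particular the surjectivity, which relies on lifting units.
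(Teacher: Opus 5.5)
Your main argument is essentially the paper's proof: each $\eta^{\xi}$ with $\xi\in R_m$ is a primitive $(M/m)$-th root of unity, so one zero on $R_m$ forces $\Phi_{M/m}(X)\mid A(X)$ by minimality of the cyclotomic polynomial, and hence $\widehat{\mathbf{1}}_A$ vanishes on all of $R_m$. One small correction: surjectivity onto the primitive roots needs no lifting of units (take $\xi=mu$ and note $\gcd(mu,M)=m\gcd(u,M/m)$), and it is not even needed for the conclusion; lifting units matters only for your alternative orbit argument.
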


\begin{proof}
By the definition of the discrete Fourier transform,
\[
    \widehat{\mathbf{1}}_A(\xi)=\sum_{a\in A}\eta^{a\xi}=A(\eta^\xi), \quad \xi\in\Z_M.
\]
Fix $m\mid M$ and take $\xi\in R_m$, that means $(\xi,M)=m$. Let $n=\frac{M}{m}$, since $(\xi,M)=m$, we can write $\xi=mu$ with $(u,n)=1$. Then
\[
    \eta^\xi=e^{-2\pi i\xi/M}=e^{-2\pi iu/n},
\]
so $\eta^\xi$ is a primitive $n$-th root of unity. When $\xi$ runs over $R_m$, $\eta^\xi$ runs over all primitive $n$-th roots of unity.

Suppose there exists some $\xi_0\in R_m$ such that $\widehat{\mathbf{1}}_A(\xi_0)=0$, then $A(\eta^{\xi_0})=0$. Since $\eta^{\xi_0}$ is a primitive $n$-th root of unity and
\[
    A(X)\in\Z[X]\subset\Q[X],
\]
we have
\[
    \Phi_n(X)\mid A(X).
\]
Hence $A(X)$ vanishes at all primitive $n$-th roots of unity. Consequently, for every $\xi\in R_m$,
\[
    \widehat{\mathbf{1}}_A(\xi)=A(\eta^\xi)=0.
\]

Thus, if $\widehat{\mathbf{1}}_A$ has a zero in $R_m$, it is identically zero on the whole $R_m$. Therefore, for any $m\mid M$, $\widehat{\mathbf{1}}_A$ is either identically zero on $R_m$ or has no zero there. Hence the support of $\widehat{\mathbf{1}}_A$ is a union of step classes.
\end{proof}

Let $\Phi_n(X)$ denote the $n$-th cyclotomic polynomial, the unique monic irreducible polynomial whose roots are the primitive $n$-th roots of unity. The tiling condition for two sets $A,B\subset \Z_M$ can be expressed neatly in terms of cyclotomic divisibility. Specifically, assuming $0\in A\cap B$, we have
\[
    A\oplus B=\Z_M
\]
if and only if
\[
    |A|\,|B|=M,
\]
and for every divisor $m\mid M$ with $m\neq 1$,
\[
    \Phi_m(X)\mid A(X) \quad \text{or}\quad \Phi_m(X)\mid B(X).
\]
Equivalently, in Fourier language,
\[
    |A|\,|B|=M, \quad \text{and}\quad \widehat{\mathbf{1}}_A(\xi)\cdot\widehat{\mathbf{1}}_B(\xi)=0 \quad \text{for all } \xi\neq 0.
\]

Note that for any $m\mid M$,
\[
    \Phi_m(X)\mid A(X) \Longleftrightarrow A(\eta^\xi)=0, \quad \text{for all } \xi\in R_{M/m}
\]
which can also be expressed as
\begin{equation}\label{eq:cyclo}
    \Phi_m(X)\mid A(X) \Longleftrightarrow \widehat{\mathbf{1}}_A(\xi)=0, \quad \text{for all } \xi\in R_{M/m}.
\end{equation}

For a finite set $A\subset\Z$, define
\[
    S_A:=\{p^\alpha:p^\alpha \text{ is a prime power and } \Phi_{p^\alpha}(X)\mid A(X)\}.
\]
The Coven--Meyerowitz conditions are:
\begin{itemize}
\item[(T1)] $A(1)=\prod_{s\in S_A}\Phi_s(1)$;
\item[(T2)] If $s_1,\ldots,s_k\in S_A$ are powers of distinct primes, then $\Phi_{s_1\cdots s_k}(X)\mid A(X)$.
\end{itemize}
It is known \cite{CovenMeyerowitz} that (T1)+(T2) are sufficient for $A$ to tile $\Z$, and that tiling implies (T1). Moreover, if $|A|$ has at most two distinct prime factors, then tiling also implies (T2). In the special case $M=pq$ with $p,q$ distinct primes, these conditions provide a complete characterization of tiles.

\section{Equivalence of pd-tiling and translational tiling on $\Z_{pq}$}\label{sec:equivalence}
Every translational tile is a positive-definite weak tile.

\begin{lemma}\label{lem:tile-implies-pd}
	If $A\oplus B=G$, then $A$ is a positive-definite weak tile.
\end{lemma}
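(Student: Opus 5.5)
We need a proof that if $A\oplus B=G$, then $A$ is a pd-tile. The natural witness is the normalized autocorrelation of $B$,
\[
f:=\frac{1}{|B|}\,\mathbf{1}_B*\mathbf{1}_{-B},\qquad\text{i.e.}\qquad f(x)=\frac{|B\cap(B+x)|}{|B|},
\]
and we verify the four defining conditions of a pd-tile in turn.

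The first three conditions are immediate. Since $f$ counts representations, $f\ge 0$. At the origin, $f(0)=|B|/|B|=1$. On the Fourier side, $\widehat{\mathbf{1}_{-B}}=\overline{\widehat{\mathbf{1}_B}}$, so
\[
\widehat f=\frac{1}{|B|}\,\bigl|\widehat{\mathbf{1}}_B\bigr|^2\ge 0 .
\]
It remains to prove the tiling identity $\mathbf{1}_A*f=\mathbf{1}_G$. Since $A\oplus B=G$ means $\mathbf{1}_A*\mathbf{1}_B=\mathbf{1}_G$, commutativity and associativity of convolution give
\[
\mathbf{1}_A*f=\frac{1}{|B|}(\mathbf{1}_A*\mathbf{1}_B)*\mathbf{1}_{-B}=\frac{1}{|B|}\,\mathbf{1}_G*\mathbf{1}_{-B}.
\]
The right-hand side evaluated at $x$ is $\frac{1}{|B|}\sum_{y}\mathbf{1}_{-B}(y)=\frac{|B|}{|B|}=1$ for every $x$, so $\mathbf{1}_A*f=\mathbf{1}_G$.

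The argument has no real obstacle: it needs only $B\neq\emptyset$, which holds because $A\oplus B=G$ forces $|A||B|=|G|>0$. The only care required is the sign convention, namely taking $\mathbf{1}_{-B}$ rather than $\mathbf{1}_B$ so that $\widehat f$ is a squared modulus; with the paper's convention $\widehat{f}(\chi)=\sum f(x)\overline{\chi(x)}$ this is exactly the conjugate.
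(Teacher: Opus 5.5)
Your proposal is correct and follows essentially the same approach as the paper: the same witness $f=\frac{1}{|B|}\mathbf{1}_B*\mathbf{1}_{-B}$, the observation $\widehat f=\frac{1}{|B|}\bigl|\widehat{\mathbf{1}}_B\bigr|^2$, and the same associativity computation for $\mathbf{1}_A*f=\mathbf{1}_G$. The only cosmetic difference is that the paper first translates $B$ so that $0\in B$, which, as your argument shows, is not actually needed.
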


\begin{proof}
	Translate $B$ if necessary so that $0\in B$, and define
	\[
	f=\frac{1}{|B|}\mathbf{1}_B*\mathbf{1}_{-B}.
	\]
	Then $f(0)=1$ and $f\ge 0$. Moreover,
	\[
	\widehat f
	=
	\frac{1}{|B|}
	\widehat{\mathbf{1}_B}\,
	\widehat{\mathbf{1}_{-B}}
	=
	\frac{1}{|B|}
	\left|\widehat{\mathbf{1}_B}\right|^2
	\ge 0.
	\]
	Finally, since $A\oplus B=G$, we have
	\[
	\mathbf{1}_A*\mathbf{1}_B=\mathbf{1}_G.
	\]
	Hence
	\[
	\mathbf{1}_A*f
	=
	\frac{1}{|B|}
	(\mathbf{1}_A*\mathbf{1}_B)*\mathbf{1}_{-B}
	=
	\frac{1}{|B|}
	\mathbf{1}_G*\mathbf{1}_{-B}
	=
	\mathbf{1}_G.
	\]
	Therefore $A$ is a positive-definite weak tile.
\end{proof}

\subsection{Delsarte parameters.}
We now recall the Delsarte linear programming framework, which will be central to our proof.

\begin{definition}
For a collection of step classes $H=\bigcup_{i=1}^r R_{m_i}$ with $0\in H$, the \textbf{standard complement} of $H$ is defined by
\[
    H':=(\Z_M\setminus H)\cup\{0\}.
\]
For a parameter $\delta>0$, set
\[
D^+(H)=\max\left\{\sum_{z\in\Z_M}h(z):\begin{array}{l}
h\in\calA(M),\ h(0)=1,\ \widehat{h}\geq 0,\\
h\geq 0 \text{ on } H,\ h=0 \text{ on } H^c
\end{array}\right\},
\]
\[
D^-(H)=\max\left\{\sum_{z\in\Z_M}h(z):\begin{array}{l}
h\in\calA(M),\ h(0)=1,\ \widehat{h}\geq 0,\\
h\leq 0 \text{ on } H^c
\end{array}\right\},
\]
and
\[
D^{\delta+}(H)=\max\left\{\sum_{z\in\Z_M}h(z):\begin{array}{l}
h\in\calA(M),\ h(0)=1,\ \widehat{h}\geq 0,\\
h\geq \delta \text{ on } H,\ h=0 \text{ on } H^c
\end{array}\right\}.
\]
These parameters satisfy the hierarchy
\[
    D^{\delta+}(H)\leq D^+(H)\leq D^-(H),
\]
and the duality relation \cite{KissLondnerMatolcsiSomlaiFunctional}
\begin{equation}\label{eq:duality}
    D^+(H)D^-(H')=M.
\end{equation}
\end{definition}

The following proposition will be crucial.

\begin{proposition}[{\cite[Proposition 3.2]{KissLondnerMatolcsiSomlaiFunctional}}]\label{prop:delsarte}
Let $H\subset\Z_M$ be a union of step classes with $0\in H$. If there exists a tiling $A\oplus B=\Z_M$ such that $H=\supp |\widehat{\mathbf{1}}_A|^2$, then
\[
    D^+(H)=D^-(H)=|B|, \quad \text{and}\quad D^+(H')=D^-(H')=|A|.
\]
\end{proposition}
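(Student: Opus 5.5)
The plan is to produce explicit feasible functions for each of the four Delsarte programs. This gives lower bounds $D^+(H)\ge |B|$, $D^-(H')\ge |A|$, $D^+(H')\ge |A|$ and $D^-(H)\ge |B|$. The duality relation (\ref{eq:duality}), applied to $H$ and to $H'$, then forces all of these to be equalities. Note that $H'$ is again a union of step classes containing $0$, and $(H')'=H$, so (\ref{eq:duality}) also gives $D^+(H')D^-(H)=M$.

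\textbf{Candidate functions.} Write $U=\Z_M^\times$ and, for a function $g$ on $\Z_M$, let $\tilde g(x)=\frac{1}{|U|}\sum_{u\in U} g(ux)$ be its symmetrization. Then $\tilde g\in\calA(M)$, and symmetrization preserves:
\begin{itemize}
\item nonnegativity of $g$;
\item the value $g(0)$;
\item the total sum $\sum_x g(x)$;
\item nonnegativity of $\widehat g$, since $\widehat{g(u\,\cdot)}(\xi)=\widehat g(u^{-1}\xi)$.
\end{itemize}
Moreover, if $g$ vanishes off a union of step classes, so does $\tilde g$. Define
\[
h_A(\xi)=\frac{1}{|A|^2}\,\widetilde{|\widehat{\mathbf 1}_A|^2}(\xi),\qquad h_B(\xi)=\frac{1}{|B|^2}\,\widetilde{|\widehat{\mathbf 1}_B|^2}(\xi).
\]
Then $h_A(0)=h_B(0)=1$ and $h_A,h_B\ge0$. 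Their Fourier transforms are nonnegative, because the transform of $|\widehat{\mathbf 1}_A|^2$ is $M\,(\mathbf 1_A*\mathbf 1_{-A})(-x)\ge0$ (and likewise for $B$). By Parseval,
\[
\sum_\xi h_A(\xi)=\frac{M|A|}{|A|^2}=|B|,\qquad \sum_\xi h_B(\xi)=\frac{M|B|}{|B|^2}=|A|.
\]

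\textbf{Support conditions.} By Lemma \ref{lem:step-support}, $h_A$ vanishes exactly off $H$, so it is feasible for $D^+(H)$, giving $D^+(H)\ge|B|$. Since $A\oplus B=\Z_M$, we have $\widehat{\mathbf 1}_A\widehat{\mathbf 1}_B=0$ away from $0$. Hence $\supp h_B\subset (\Z_M\setminus H)\cup\{0\}=H'$, and $h_B\ge0$ there. So $h_B$ is feasible for $D^+(H')$, giving $D^+(H')\ge|A|$. By the hierarchy $D^+\le D^-$, we also get $D^-(H')\ge |A|$ and $D^-(H)\ge|B|$.

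\textbf{Conclusion by duality.} From $D^+(H)D^-(H')=M=|A||B|$ together with $D^+(H)\ge|B|$ and $D^-(H')\ge|A|$, we get $D^+(H)=|B|$ and $D^-(H')=|A|$. From $D^+(H')D^-(H)=M$ together with $D^+(H')\ge|A|$ and $D^-(H)\ge|B|$, we get $D^+(H')=|A|$ and $D^-(H)=|B|$.

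\textbf{Main point to check.} The only real subtlety is bookkeeping: the Delsarte programs constrain $h$ on the ``space'' side, while here $h$ lives naturally on the frequency side. One has to confirm that the roles are consistent, namely that the transform of $|\widehat{\mathbf 1}_A|^2$ (up to the reflection $x\mapsto -x$ and the factor $M$) is nonnegative. That holds because it is an autocorrelation of $\mathbf 1_A$. One should also check that the normalization of (\ref{eq:duality}) matches the conventions used here.
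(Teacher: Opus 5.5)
Your proof is correct. The symmetrized functions $|\widehat{\mathbf{1}}_A|^2/|A|^2$ and $|\widehat{\mathbf{1}}_B|^2/|B|^2$ are feasible for $D^+(H)$ and $D^+(H')$ with values $|B|$ and $|A|$, and applying (\ref{eq:duality}) to both $H$ and $H'$ (using $(H')'=H$) squeezes all four parameters. This is the same witness-and-squeeze mechanism the paper runs in Step~3 of the proof of Theorem~\ref{thm:main}; the paper itself only quotes this proposition from the literature.

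You only need the elementary half $D^+(H)D^-(H')\le M$ of the duality, which you can prove directly. Take $h_1$ feasible for $D^+(H)$ and $h_2$ feasible for $D^-(H')$. Then $\sum_x h_1h_2\le 1$, while by Parseval this sum equals $\frac1M\sum_\xi \widehat{h_1}\widehat{h_2}\ge \frac1M\widehat{h_1}(0)\widehat{h_2}(0)$. So your argument does not depend on the deeper equality in (\ref{eq:duality}), and this also settles the normalization concern you raised.
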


From Proposition \ref{prop:delsarte}, if there exists a tiling $D\oplus C=\Z_{pq}$ with $H_D:=\supp |\widehat{\mathbf{1}}_D|^2$, then
\[
    D^+(H')=|D|.
\]
Therefore, on $\Z_{pq}$, there are exactly six possible supports $H_D:=\supp |\widehat{\mathbf{1}}_D|^2$, arising from classical tilings $D\oplus C=\Z_{pq}$.

We list them below for reference.
\begin{enumerate}
\item $D=\{0,1,\ldots,p-1\}$, \quad $C=p\Z_{pq}$. Then
\[
    \widehat{\mathbf{1}}_D(\xi)=\sum_{d=0}^{p-1}e^{-2\pi id\xi/(pq)}=\frac{1-e^{-2\pi ip\xi/(pq)}}{1-e^{-2\pi i\xi/(pq)}}.
\]
Thus $\widehat{\mathbf{1}}_D(\xi)=0\Longleftrightarrow q\mid \xi$ and $pq\nmid \xi$, i.e. exactly on $R_q$. Hence
\[
    H_D=\{0\}\cup R_1\cup R_p.
\]

\item Similarly, swapping $p$ and $q$, $D=\{0,1,\ldots,q-1\}$, $C=q\Z_{pq}$, then
\[
    H_D=\{0\}\cup R_1\cup R_q.
\]

\item Let $D=p\Z_{pq}=\{0,p,2p,\ldots,(q-1)p\}$, \quad $C=\{0,1,\ldots,p-1\}$. Then
\[
    \widehat{\mathbf{1}}_D(\xi)=\sum_{t=0}^{q-1}e^{-2\pi i(pt)\xi/(pq)}=\sum_{t=0}^{q-1}e^{-2\pi it\xi/q}.
\]
This is a sum of $q$th roots of unity. $\widehat{\mathbf{1}}_D(\xi)\neq 0$ iff $\xi\equiv 0\ (\mathrm{mod}\ q)$. Hence
\[
    H_D=\{0\}\cup R_q.
\]

\item $D=q\Z_{pq}$, $C=\{0,1,\ldots,q-1\}$, then
\[
    H_D=\{0\}\cup R_p.
\]

\item $D=\Z_{pq}$, \quad $C=\{0\}$. Then $\widehat{\mathbf{1}}_D$ is zero for all $\xi\neq 0$, so
\[
    H_D=\{0\}.
\]

\item $D=\{0\}$, \quad $C=\Z_{pq}$. Then $\widehat{\mathbf{1}}_D\equiv 1$, so
\[
    H_D=\Z_{pq}.
\]
\end{enumerate}
By Proposition \ref{prop:delsarte}, for all the above cases we have
\[
    D^+(H_D')=|D|.
\]

The following lemma rules out one specific Fourier support that would otherwise be exceptional.

\begin{lemma}\label{lem:exceptional}
Let $p,q$ be distinct primes and $A\subset \Z_{pq}$ nonempty. Then
\[
    \supp(|\widehat{\mathbf{1}}_A|^2)\neq \{0\}\cup R_p\cup R_q.
\]
\end{lemma}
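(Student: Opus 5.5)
Suppose for contradiction that $\supp(|\widehat{\mathbf{1}}_A|^2)=\{0\}\cup R_p\cup R_q$. Then $\widehat{\mathbf{1}}_A$ vanishes exactly on $R_1$. Recall the correspondence: for $\xi\in R_m$, $\eta^\xi$ is a primitive $(pq/m)$-th root of unity. So $R_1$ corresponds to $\Phi_{pq}$, $R_p$ to $\Phi_q$, and $R_q$ to $\Phi_p$. By (\ref{eq:cyclo}), the assumption says
\[
\Phi_{pq}(X)\mid A(X),\qquad \Phi_p(X)\nmid A(X),\qquad \Phi_q(X)\nmid A(X).
\]
The plan is to show that $\Phi_{pq}\mid A(X)$ alone already forces $\Phi_p\mid A(X)$ or $\Phi_q\mid A(X)$, for any nonempty $A$. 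This contradicts the assumption.

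The key tool is the classical structure theorem for vanishing sums of $pq$-th roots of unity, due to de Bruijn and Lam--Leung. Write $\omega=e^{-2\pi i/(pq)}$. Since $A(X)$ is reduced modulo $X^{pq}-1$, the condition $\Phi_{pq}\mid A(X)$ means that $\sum_{a\in A}\omega^{a}=0$. De Bruijn's theorem then says that the mask polynomial satisfies, modulo $X^{pq}-1$,
\[
A(X)\equiv P(X)\,\frac{X^{pq}-1}{X^{q}-1}+Q(X)\,\frac{X^{pq}-1}{X^{p}-1},
\]
with $P,Q$ having \emph{nonnegative} integer coefficients, because $A$ has $0/1$ coefficients. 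Combinatorially, $A$ is a disjoint union of cosets of the subgroups $q\Z_{pq}$ (size $p$) and $p\Z_{pq}$ (size $q$).

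From this decomposition, a single coset of $q\Z_{pq}$ is divisible by $\Phi_p$ but not by $\Phi_q$, and a single coset of $p\Z_{pq}$ is divisible by $\Phi_q$ but not by $\Phi_p$. Since $\Phi_{p}$, $\Phi_q$ and $\Phi_{pq}$ are pairwise coprime, I split into cases.
\begin{itemize}
\item If $A$ consists only of cosets of $q\Z_{pq}$, then $\Phi_p\mid A(X)$, which is a contradiction.
\item If $A$ consists only of cosets of $p\Z_{pq}$, then $\Phi_q\mid A(X)$, which is a contradiction.
\end{itemize}
The remaining case is that both types occur. Here a cheaper route avoids any further cyclotomic bookkeeping.

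Two cosets $x+q\Z_{pq}$ and $y+p\Z_{pq}$ always intersect, by the Chinese remainder theorem. So if $A$ is a \emph{disjoint} union of cosets, it cannot contain cosets of both types. This rules out the mixed case.

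The main obstacle is justifying the decomposition with nonnegative coefficients. The Lam--Leung/de Bruijn result only guarantees integer $P,Q$, and one must argue positivity for $0/1$-valued $A$. I would do this directly, as follows.
\begin{itemize}
\item Note that $\Phi_{pq}\mid A$ means that the function $c(i,j)=\mathbf{1}_A$, written in CRT coordinates $\Z_p\times\Z_q$, is annihilated by $(1-S_p)(1-S_q)$-type operators. Concretely, its Fourier transform vanishes at all $(u,v)$ with $u\neq0$ and $v\neq0$.
\item Deduce that $c(i,j)=\alpha(i)+\beta(j)$ for some real functions $\alpha,\beta$.
\item Since $c\in\{0,1\}$, a short argument finishes it. If $\alpha$ is nonconstant, then $\alpha(i)-\alpha(i')$ is constant in $j$ and nonzero, which forces $\beta$ to be constant. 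Symmetrically, if $\beta$ is nonconstant then $\alpha$ is constant.
\item Hence $c$ depends on only one coordinate. Then $\mathbf{1}_A$ is a union of cosets of a single subgroup, giving $\Phi_p\mid A$ or $\Phi_q\mid A$, which is again a contradiction.
\end{itemize}
This direct CRT argument is the version I would actually write, since it bypasses both de Bruijn's theorem and the positivity issue.
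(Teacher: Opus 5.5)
Your proposal is correct, and the argument you say you would actually write is essentially the paper's proof: the paper also passes to $\Z_p\times\Z_q$ and uses Fourier inversion to get $a_{ij}=r_i/q+c_j/p-|A|/(pq)$, which is your $\alpha(i)+\beta(j)$; it then uses the $0/1$ constraint to force $A$ to be a union of cosets of a single subgroup, contradicting that $\widehat{\mathbf{1}}_A$ is nonzero on both $R_p$ and $R_q$. The de Bruijn/Lam--Leung detour is unnecessary, and you assert its nonnegativity claim before justifying it, but your final argument does not depend on it.
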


\begin{proof}
Because $\supp(|\widehat{\mathbf{1}}_A|^2)=\supp(\widehat{\mathbf{1}}_A)$. Hence it suffices to prove
\[
    \supp(\widehat{\mathbf{1}}_A)\neq \{0\}\cup R_p\cup R_q.
\]

By the Chinese remainder theorem, $\Z_{pq}\cong\Z_p\times\Z_q$. Then we regard $A$ as a subset of $\Z_p\times\Z_q$ and write
\[
    a_{ij}:=\mathbf{1}_A(i,j)\in\{0,1\}, \quad i\in\Z_p,\ j\in\Z_q.
\]
Let
\[
    r_i:=\sum_{j\in\Z_q}a_{ij}, \quad c_j:=\sum_{i\in\Z_p}a_{ij}
\]
be the row and column sums.

Under the identification $\Z_{pq}\cong\Z_p\times\Z_q$, the step classes are
\[
    R_p=\{(0,v):v\neq 0\}, \quad R_q=\{(u,0):u\neq 0\}, \quad R_1=\{(u,v):u\neq 0, v\neq 0\}.
\]
Thus the assumption
\[
    \supp(\widehat{\mathbf{1}}_A)=\{0\}\cup R_p\cup R_q
\]
is equivalent to
\[
    \widehat{\mathbf{1}}_A(u,v)=0, \quad \text{for all } u\neq 0,\ v\neq 0.
\]
On the other hand, by the Fourier inversion formula on $\Z_p\times\Z_q$,
\[
    a_{ij}=\frac{1}{pq}\sum_{u\in\Z_p}\sum_{v\in\Z_q}\widehat{\mathbf{1}}_A(u,v)e^{2\pi i(\frac{ui}{p}+\frac{vj}{q})}.
\]
Since $\widehat{\mathbf{1}}_A(u,v)=0$ for $u\neq 0$, $v\neq 0$, this reduces to
\[
    a_{ij}=\frac{1}{pq}\left(\sum_{u\in\Z_p}\widehat{\mathbf{1}}_A(u,0)e^{2\pi iui/p}+\sum_{v\in\Z_q}\widehat{\mathbf{1}}_A(0,v)e^{2\pi ivj/q}-\widehat{\mathbf{1}}_A(0,0)\right).
\]
Using,
\[
    \widehat{\mathbf{1}}_A(u,0)=\sum_{i\in\Z_p}r_ie^{-2\pi iui/p}, \quad \widehat{\mathbf{1}}_A(0,v)=\sum_{j\in\Z_q}c_je^{-2\pi ivj/q},
\]
and $\widehat{\mathbf{1}}_A(0,0)=|A|$. Applying one dimensional Fourier inversion to the sequences $\{r_i\}$ and $\{c_j\}$ gives
\[
    a_{ij}=\frac{r_i}{q}+\frac{c_j}{p}-\frac{|A|}{pq}.
\]
Thus for any $i,i'\in\Z_p$ and $j,j'\in\Z_q$,
\begin{equation}\label{eq:diffs}
    a_{ij}-a_{ij'}=\frac{c_j-c_{j'}}{p}, \quad a_{ij}-a_{i'j}=\frac{r_i-r_{i'}}{q}.
\end{equation}
We claim that no row of the matrix $(a_{ij})$ can contain both $0$ and $1$. Suppose, for contradiction, that there exists some $i$ and $j\neq j'$ such that
\[
    a_{ij}=1, \quad a_{ij'}=0.
\]

From (\ref{eq:diffs}), $1=a_{ij}-a_{ij'}=\frac{c_j-c_{j'}}{p}$, hence $c_j-c_{j'}=p$. Since $0\leq c_j,c_{j'}\leq p$, we must have $c_j=p$ and $c_{j'}=0$. Thus column $j$ is all 1's and column $j'$ is all 0's.

Now for any column $k$, using (\ref{eq:diffs}) with the all-zero column $j'$, we get
\[
    a_{ik}=\frac{c_k}{p} \quad \text{for all } i.
\]
Since $a_{ik}\in\{0,1\}$, we have $\frac{c_k}{p}\in\{0,1\}$, i.e., $c_k\in\{0,p\}$. Hence every column is constant, so $A$ is a union of whole columns. In other words, there exist $b_j\in\{0,1\}$ such that
\[
    a_{ij}=b_j, \quad \text{for all } i,j.
\]
Then
\[
    \widehat{\mathbf{1}}_A(u,v)=\left(\sum_{i\in\Z_p}e^{-2\pi iui/p}\right)\left(\sum_{j\in\Z_q}b_je^{-2\pi ivj/q}\right).
\]
For $u\neq 0$, the first factor is zero, so $\widehat{\mathbf{1}}_A(u,v)=0$ for all $u\neq 0$ and all $v$. Consequently,
\[
    \supp(\widehat{\mathbf{1}}_A)\subset \{0\}\cup R_p,
\]
contradicting the assumption that $\supp(\widehat{\mathbf{1}}_A)=\{0\}\cup R_p\cup R_q$. Hence no row can contain both 0 and 1.

By symmetry, exchanging rows and columns, and swapping $p$ and $q$, the same argument shows that no column can contain both 0 and 1. Therefore every row is constant and every column is constant. If the matrix $(a_{ij})$ contains both 0 and 1, pick indices $i_0,j_0,i_1,j_1$ such that $a_{i_0j_0}=1$ and $a_{i_1j_1}=0$. Since row $i_0$ is constant, $a_{i_0j_1}=1$; but column $j_1$ is constant, so $a_{i_0j_1}=0$, a contradiction. Hence the matrix is constant, so $A=\emptyset$ or $A=\Z_{pq}$.

Since $A$ is nonempty, we must have $A=\Z_{pq}$. But then $\widehat{\mathbf{1}}_A(\xi)=0$ for all $\xi\neq 0$, so $\supp(\widehat{\mathbf{1}}_A)=\{0\}$, again a contradiction.

Thus
\[
    \supp(|\widehat{\mathbf{1}}_A|^2)\neq \{0\}\cup R_p\cup R_q.
\]
\end{proof}

\subsection{Proof of the Theorem \ref{thm:main}.}
Now we give a proof of Theorem \ref{thm:main}.

\begin{proof}[Proof of the Theorem \ref{thm:main}]
Let $f:\Z_{pq}\to\R$ be a positive-definite function satisfying
\[
    f(0)=1, \quad f\geq 0, \quad \widehat{f}\geq 0, \quad \mathbf{1}_A*f=\mathbf{1}_{\Z_{pq}}.
\]
Taking Fourier transform of $\mathbf{1}_A*f=\mathbf{1}_{\Z_M}$ gives
\[
    \widehat{\mathbf{1}}_A(\xi)\widehat{f}(\xi)=pq\delta_0(\xi) \quad (\xi\in\Z_{pq}).
\]
Thus
\[
    \supp(\widehat{\mathbf{1}}_A)\cap\supp(\widehat{f})=\{0\}.
\]
Set
\[
    H:=\supp(|\widehat{\mathbf{1}}_A|^2)\subset\Z_{pq}, \quad \text{and}\quad H'=(\Z_{pq}\setminus H)\cup\{0\}.
\]
\textbf{Step 1:} Determine all possible $H$. By Lemma \ref{lem:step-support}, $H$ is a union of step classes, and the nontrivial step classes are $R_1,R_p,R_q$ ($R_{pq}=\{0\}$). There are 8 possible unions containing $\{0\}$. We first rule out two of them.
\begin{enumerate}
\item[(i)] If $H=\{0\}\cup R_1$, then $\widehat{\mathbf{1}}_A$ vanishes on both $R_p$ and $R_q$. By (\ref{eq:cyclo}), this implies
\[
    \Phi_p(X)\mid A(X), \quad \Phi_q(X)\mid A(X).
\]
Hence $pq=\Phi_p(1)\Phi_q(1)\mid A(1)=|A|$. Since $0<|A|\leq pq$, we must have $|A|=pq$, i.e., $A=\Z_{pq}$. But then $\widehat{\mathbf{1}}_A$ is zero for all $\xi\neq 0$, so $H=\{0\}$, contradiction.
\item[(ii)] If $H=\{0\}\cup R_p\cup R_q$, this is impossible by Lemma \ref{lem:exceptional}.
\end{enumerate}
Therefore, $H$ can only be one of the following six possibilities.
\[
    \{0\}, \ \Z_{pq}, \ \{0\}\cup R_1\cup R_p, \ \{0\}\cup R_1\cup R_q, \ \{0\}\cup R_p, \ \{0\}\cup R_q.
\]

\textbf{Step 2:} Determine $D^+(H')$ using Proposition \ref{prop:delsarte} and the model tilings.
\begin{itemize}
\item If $H=\{0\}$, take $D=\Z_{pq}$, $C=\{0\}$. Then $C\oplus D=\Z_{pq}$ and $H_D=\supp(|\widehat{\mathbf{1}}_D|^2)=\{0\}=H$.
\item If $H=\Z_{pq}$, take $D=\{0\}$, $C=\Z_{pq}$. Then $C\oplus D=\Z_{pq}$ and $H_D=\Z_{pq}=H$.
\item If $H=\{0\}\cup R_1\cup R_p$, take $D=\{0,1,\ldots,p-1\}$, $C=p\Z_{pq}$. Then $C\oplus D=\Z_{pq}$ and $H_D=\{0\}\cup R_1\cup R_p=H$.
\item If $H=\{0\}\cup R_1\cup R_q$, take $D=\{0,1,\ldots,q-1\}$, $C=q\Z_{pq}$. Then $C\oplus D=\Z_{pq}$ and $H_D=\{0\}\cup R_1\cup R_q=H$.
\item If $H=\{0\}\cup R_q$, take $D=p\Z_{pq}$, $C=\{0,1,\ldots,p-1\}$. Then $C\oplus D=\Z_{pq}$ and $H_D=\{0\}\cup R_q=H$.
\item If $H=\{0\}\cup R_p$, take $D=q\Z_{pq}$, $C=\{0,1,\ldots,q-1\}$. Then $C\oplus D=\Z_{pq}$ and $H_D=\{0\}\cup R_p=H$.
\end{itemize}
By Proposition \ref{prop:delsarte}, for each case we have $D^+(H')=|D|$. Hence on $\Z_{pq}$, $D^+(H')$ can only take the values
\[
    D^+(H')\in\{1,p,q,pq\},
\]
and specifically
\begin{enumerate}
\item $H'=\{0\}\Rightarrow D^+(H')=1$;
\item $H'=\{0\}\cup R_q$ or $H'=\{0\}\cup R_1\cup R_q\Rightarrow D^+(H')=p$;
\item $H'=\{0\}\cup R_p$ or $H'=\{0\}\cup R_1\cup R_p\Rightarrow D^+(H')=q$;
\item $H'=\Z_{pq}\Rightarrow D^+(H')=pq$.
\end{enumerate}

\textbf{Step 3:} Use Delsarte parameters to determine $|A|$.

On one hand, define
\[
    u(\xi)=\frac{|\widehat{\mathbf{1}}_A(\xi)|^2}{|A|^2}.
\]
Then $u(0)=1$, $\supp(u)=H$, $u|_{H^c}\leq 0$, and
\[
    \widehat{u}(x)=\frac{pq}{|A|^2}(\mathbf{1}_A*\mathbf{1}_{-A})(x)\geq 0.
\]
Thus, after possibly averaging to a step function as in \cite{KissLondnerMatolcsiSomlaiFunctional}, which does not change the sum and feasibility, $u$ is a feasible solution for $D^-(H)$ and
\[
    D^-(H)\geq \sum_{\xi\in\Z_{pq}}u(\xi)=\frac{1}{|A|^2}\sum_{\xi}|\widehat{\mathbf{1}}_A(\xi)|^2=\frac{pq}{|A|}.
\]
On the other hand, define
\[
    v(\xi)=\frac{\widehat{f}(\xi)}{\widehat{f}(0)}.
\]
Since $\supp(\widehat{f})\subset H'$, we have $v|_{H^c}=0$ and $v(0)=1$. Moreover,
\[
    \widehat{v}(x)=\frac{pq}{\widehat{f}(0)}f(-x)\geq 0,
\]
since $f\geq 0$. Hence $v$ is a feasible solution for $D^+(H')$ and
\[
    D^+(H')\geq \sum_{\xi}v(\xi)=\frac{1}{\widehat{f}(0)}\sum_{\xi}\widehat{f}(\xi)=\frac{pq\cdot f(0)}{\widehat{f}(0)}=\frac{pq}{|F|}=|A|,
\]
where we used that $\widehat{f}(0)=\sum_x f(x)=:|F|$ and from the convolution equation $\mathbf{1}_A*f=\mathbf{1}_{\Z_{pq}}$ we have $|A|\cdot |F|=pq$, so $|F|=pq/|A|$. Then $\frac{pq}{|F|}=|A|$.

Combining with the duality relation (\ref{eq:duality}),
\[
    |A|=\frac{pq}{pq/|A|}\geq \frac{pq}{D^-(H)}=D^+(H')\geq |A|.
\]
Thus equality holds, and we obtain
\[
    |A|=D^+(H')=|D|\in\{1,p,q,pq\}.
\]

\textbf{Step 4:} Verify the C-M conditions.

Since $\supp(\widehat{\mathbf{1}}_D)=\supp(\widehat{\mathbf{1}}_A)$, equation (\ref{eq:cyclo}) implies that the cyclotomic polynomials $\Phi_p$ and $\Phi_q$ divide the polynomials $A(X)$ and $D(X)$ in exactly the same way. Hence $S_A=S_D$. Moreover, since $C\oplus D=\Z_{pq}$ is a tiling, the set $D$ satisfies the Coven-Meyerowitz condition (T1), and therefore $A$ also satisfies (T1).

It remains to verify (T2). Recall
\[
    S_A:=\{p^\alpha:p^\alpha \text{ is a prime power and } \Phi_{p^\alpha}(X)\mid A(X)\}.
\]
For $M=pq$ with distinct primes, we have $S_A\subset \{p,q\}$.
\begin{itemize}
\item If $|S_A|\leq 1$, i.e., $S_A=\emptyset$, $\{p\}$, or $\{q\}$, then the requirement of (T2), take at least two distinct prime powers, never occurs, so (T2) holds trivially.
\item If $S_A=\{p,q\}$, then by (T1),
\[
    A(1)=\Phi_p(1)\Phi_q(1)=p\cdot q=pq.
\]
Hence $|A|=A(1)=pq=M$. Since $A\subset \Z_{pq}$ and $|\Z_{pq}|=pq$, we must have $A=\Z_{pq}$. Consequently,
\[
    A(X)=1+X+\cdots+X^{pq-1}=\frac{X^{pq}-1}{X-1}.
\]
The cyclotomic factorization $X^{pq}-1=\prod_{d\mid pq}\Phi_d(X)$ implies $\Phi_{pq}(X)\mid A(X)$. Hence (T2) holds as well.
\end{itemize}
Therefore \(A\) satisfies both Coven--Meyerowitz conditions \((T1)\) and \((T2)\).
Since \(M=pq\) is square-free and \(A\) is regarded as a subset of
\(\mathbb Z_M\), the finite cyclic form of the Coven--Meyerowitz criterion
implies that \(A\) tiles \(\mathbb Z_M\). Hence there exists a set
\(B\subseteq \mathbb Z_M\) such that
\(
A\oplus B=\mathbb Z_M .
\)
This proves the nontrivial implication. Conversely, if \(A\oplus B=\mathbb Z_M\),
then Lemma~\ref{lem:tile-implies-pd} shows that \(A\) is a positive-definite
weak tile. Consequently, translational tiles and positive-definite weak tiles
coincide in \(\mathbb Z_{pq}\).
\end{proof}

\begin{remark}
The Delsarte parameters are defined on the space of step functions $\calA(M)$. The feasible functions used in the proof may not belong to $\calA(M)$ initially, but we invoke the averaging procedure from \cite{KissLondnerMatolcsiSomlaiFunctional} to replace them by step functions without changing the sum and feasibility. Hence it is sufficient to consider step functions without loss of generality.
\end{remark}

\subsection{Open problems.}
We have proved that for distinct primes $p,q$, every positive-definite weak tile of $\Z_{pq}$ is necessarily a classical tile. For general finite groups $\Z_M$, we propose the following three problems for further investigation.
\begin{enumerate}
\item Can the equivalence of pd tiling and tiling be extended to more general finite groups $\Z_M$, such as $\Z_{p^nq}$ or $\Z_{pqr}$?
\item Is it possible to establish a unified criterion for positive-definite weak tiling on finite groups using mask polynomials, step classes, Delsarte parameters, and the C-M conditions, thereby completely characterizing when a pd-tile is a tile?

\item For a set \(E\subseteq \mathbb Z_M\), let \(\mu\) be a non-negative weak tiling
measure for \(E\), that is,
\(
\mathbf 1_E * \mu=\mathbf 1_{\mathbb Z_M}.
\)
Is it true that \(\mu\) can be represented as a convex combination of measures
associated with classical tiling complements of \(E\)? More precisely, do there
exist finitely many sets \(T_1,\ldots,T_N\subseteq \mathbb Z_M\) and coefficients
\(c_1,\ldots,c_N\ge 0\), with
\(
\sum_{k=1}^N c_k=1,
\)
such that
\(
\mu=\sum_{k=1}^N c_k\delta_{T_k},
\)
where each \(T_k\) satisfies
\(
E\oplus T_k=\mathbb Z_M?
\)
\end{enumerate}

\vspace{85pt}
{\linespread{1}\selectfont

}

\medskip

\newpage 
{\linespread{1}\selectfont\footnotesize
\noindent\hspace*{12pt}{\scshape Mamateli Kadir: School of Mathematics and Statistics \& Research Center of Modern Mathematics and Applications, Kashi University, Kashi 844000, China}\par
\noindent\hspace*{12pt}\textit{E-mail address}: \texttt{mamatili880@163.com}\par
\medskip
\noindent\hspace*{12pt}{\scshape Kaibo Fan: School of Mathematics and Statistics, Central China Normal University, Wuhan 430079, China}\par
\noindent\hspace*{12pt}\textit{E-mail address}: \texttt{756298493@qq.com}\par
}
\end{document}